\numberwithin{equation}{section}
\newtheorem{theorem}{Theorem}[section]
\newtheorem{lemma}[theorem]{Lemma}
\newtheorem{corollary}[theorem]{Corollary}
\newtheorem{proposition}[theorem]{Proposition}
\newtheorem{definition}[theorem]{Definition}
\newtheorem{remark}[theorem]{Remark}
\renewcommand{\subsectionmark}[1]{\markright{~}}
\newtheorem{example}[theorem]{Example}
\newcommand{\LL}{{\mathcal L}}
\newcommand{\PP}{{\mathcal P}}
\newcommand{\join}{{\vee}}
\newcommand{\meet}{{\wedge}}
\newcommand{\ad}{{a.d.}}
\newcommand{\madf}{{m.a.d.}}
\begin{document}

\title{Topologies on $X$ as points within $2^{\mathcal{P}(X)}$}

\author{
Jorge L. Bruno\\
 National University of Ireland, Galway \\ \texttt{brujo.email@gmail.com}
\and
Aisling E. McCluskey  \\ National University of Ireland, Galway \\ \texttt{aisling.mccluskey@NUIgalway.ie}
}



\maketitle

\begin{abstract}
A topology on a nonempty set $X$ specifies a natural subset of $\mathcal{P}(X)$. By identifying $\mathcal{P}(\mathcal{P}(X))$ with the totally disconnected compact Hausdorff space $2^{\mathcal{P}(X)}$, the lattice $Top(X)$ of all topologies on $X$ is a natural subspace therein. We investigate topological properties of $Top(X)$ and give sufficient model-theoretic conditions for a general subspace of $2^{\mathcal{P}(X)}$  to be compact.
\end{abstract}
\section{Introduction} Families of subsets of a nonempty set $X$ occur naturally as elements of the product space $2^{\mathcal{P}(X)}$ (where $2$ carries the discrete topology)  when one identifies $\mathcal{P}(\mathcal{P}(X))$ with $2^{\mathcal{P}(X)}$. Thus for example $Ult(X)$, $Fil(X)$, $Top(X)$ and $Lat(X)$, denoting the families of all ultrafilters, filters and topologies on $X$ and sublattices of $\mathcal{P}(X)$ respectively, occur as natural subspaces of $2^{\mathcal{P}(X)}$. Our investigation focuses principally on the topological nature of the subspace $Top(X)$ while also establishing sufficient model-theoretic conditions for a general subspace of $2^{\mathcal{P}(X)}$ to be compact. \\[5mm]
$Top(X)$ has been an object of study since its introduction by Birkhoff in \cite{MR1366862bis}. It is a complete atomic and complemented lattice under inclusion and it has undergone exploration mainly as an order-theoretic structure (\cite{MR0388306}, \cite{MR0305322}). More recent work (\cite{MR1617095}, \cite{MR1606394}, \cite{MR1924043} and \cite{MR2051095}) has concerned the order-theoretic nature of intervals of the form $[\sigma, \rho] = \{\tau \in Top(X): \sigma \leq \tau \leq \rho \}$. Our interest is motivated by the location of $Top(X)$ within the product space $2^{\mathcal{P}(X)}$ and of the subsequent topological structure it assumes from this location, about which little appears to be known. For Tychonoff $X$, the recognition of $Ult(X)$ when regarded as a subspace of $2^{\mathcal{P}(X)}$ as a homeomorphic copy of $\beta(X)$ suggests potential for this study.
\\

\section{Preliminaries and Examples}

We shall assume that $X$ is an infinite set throughout. We begin by recalling the subbasic open sets of the product space $2^{\mathcal{P}(X)}$:

\begin{definition}
Given $A \subseteq X$, we define $A^+ :=   \{\mathcal{F} \in 2^{\mathcal{P}(X)}: A \in \mathcal{F}\}$ and $ A^- :=  \{\mathcal{F} \in 2^{\mathcal{P}(X)}: A \not \in \mathcal{F}\}$ and observe that these are the subbasic open subsets of the product space $2^{\mathcal{P}(X)}$.
\end{definition}

\begin{remark}
Note that the subbasic open sets are also closed since $A^+ = 2^{\mathcal{P}(X)} \setminus A^-$. Adopting standard order-theoretic notation, we observe that $A^+ = \{A\}^{\uparrow}$ where $\theta^{\uparrow} = \{\phi \in \mathbb{P}: \theta \leq \phi \} = [\theta, \infty)$ for a partially ordered set (poset) $(\mathbb{P}, \leq)$. Given $\phi$, $\theta$ and $\psi$ in the poset $2^{\mathcal{P}(X)}$, we note the more general result that $\phi^{\downarrow} = \{\psi \mid \psi \subseteq \phi\}$ and $\phi^{\uparrow} = \{\psi \mid \phi \subseteq \psi\}$ are each closed sets. Simply observe, for example,  that for any $\theta \in 2^{\mathcal{P}(X)} \smallsetminus \phi^{\downarrow}$, there is some $B \in \theta$ such that $B \not \in \phi$. Then $B^+$ is an open neighbourhood of $\theta$ that is disjoint from $\phi^{\downarrow}$, whence $\phi^{\downarrow}$ is closed.\\
Given elements $\phi$, $\theta$ in an arbitrary poset $\mathbb{P}$, $\phi^{\uparrow}$ and $\theta^{\downarrow}$ define the subbasic closed sets for the \emph{interval topology} on  $\mathbb{P}$. Thus $2^{\mathcal{P}(X)}$'s product topology contains the associated interval topology. Note further that the product space $2^{\mathcal{P}(X)}$ is both minimal Hausdorff and maximal compact.  Thus a lattice that can be realized as a closed subset of $2^{\PP(X)}$ has a Hausdorff interval topology precisely when the lattice-morphism is also a homeomorphism.
\end{remark}

\subsection*{Examples in $Top(X)$}
\begin{example} The set of $T_1$ topologies on $X$ is closed in $Top(X)$.

\end{example}
\begin{proof} Any $T_1$ topology on $X$ must contain the cofinite topology $\mathcal{C} = \{A \subseteq X: A \mbox{ is cofinite}\} = \{ A_i : i \in I \}$.
Notice that $\mathcal{C}^{\uparrow}  =\displaystyle \cap_i A_i^+$. Then $\mathcal{C}^{\uparrow}
\cap
Top(X)$ is closed in $Top(X)$ and it is precisely the set of $T_1$ topologies on $X$.

\end{proof}

Recall that an expansive (contractive) topological property, $Q$, is one for which the collection of all topologies on a fixed set $X$ with property $Q$ is upwards (downwards) closed. As an easy consequence of the above remark, we have that if $Q$ is an expansive (contractive) topological property for which the set $Top_Q(X)$ of all $Q$ topologies on $X$ has finitely many minimal (maximal) elements, then $Top_Q(X)$ must be closed in $Top(X)$. Moreover, if $\mathbb Q = \{Q_i: i \in I\}$ is a collection of expansive (contractive) topological properties for which $Top_{Q_i}(X)$ has finitely many minimal (maximal) elements for each $i$, then $Top_{\mathbb{Q}}(X) = \{\sigma \in Top(X) \mid \sigma$ is $Q_i,$ $\forall i \in I\}$ is closed in $Top(X)$.

\begin{example} Let $f: X \rightarrow X$ be a function and $\sigma$ any topology on $X$. Then
\begin{eqnarray*}
\sigma_f & = & \{ \rho \in Top(X) \mid f: (X, \sigma) \rightarrow (X, \rho) \mbox{ is continuous} \} \mbox{ and}\\
\sigma^f & = & \{ \rho \in Top(X) \mid f: (X, \rho) \rightarrow (X, \sigma) \mbox{ is continuous} \}
\end{eqnarray*}
are closed in $Top(X)$.
\end{example}
\begin{proof} Let $\tau = \{X\} \cup \{A \mid f^{-1}(A) \in \sigma\}$. In a similar spirit to the quotient topology, this topology is the largest topology which makes $f$ continuous and thus we have $\sigma_f = \tau^{\downarrow}\cap
Top(X)$. As shown above, this set is closed in $Top(X)$. The proof is similar for $\sigma^f$.
\end{proof}
Note that if $f: X \rightarrow X$ is injective and if $\sigma$ is the cofinite topology $\mathcal{C}$, then $\sigma^f$ is precisely the set of all $T_1$ topologies on $X$.
\begin{example}Let $f: X \rightarrow X$ be a function. Then
\begin{eqnarray*}
 Cns(f)& =& \{ \rho \in Top(X) \, | \, f: (X, \rho) \rightarrow (X, \rho)\mbox{ is continuous} \},
  \\
 Open(f)& = &\{ \rho \in Top(X)\, | \, f: (X, \rho) \rightarrow (X, \rho)\mbox{ is open} \}, \mbox{ and}\\
  Closed(f)& = & \{ \rho \in Top(X)\, | \, f: (X, \rho) \rightarrow (X, \rho)\mbox{ is closed} \}
\end{eqnarray*}
 are closed in $Top(X)$.
\end{example}
\begin{proof} Observe that if $\rho \in Top(X)\smallsetminus Cns(f)$ we must have $A \in \rho$ so that $f^{-1}(A) \not \in \rho$. Then $A^+ \cap [f^{-1}(A)]^- \cap Top(X)$ is open in $Top(X)$, contains $\rho$  and  is disjoint from $Cns(f)$, whence $Cns(f)$ is closed in $Top(X)$. The proof is similar for $Open(f)$ and $Closed(f)$.
\end{proof}

In view of the above, if $f$ is a bijection then $Hom(f) = \{\sigma \in Top(X) \mid f: (X, \sigma) \rightarrow (X, \sigma) \mbox{ a homeomorphism}\}$ is also closed in $Top(X)$ (since $Hom(f) = Cns(f) \cap Open(f)$). Note also that for any function $f: X \rightarrow X$, $Cns(f)$ is a complete sublattice of $Top(X)$. \\[5mm]

\subsection*{Examples of function spaces}
Let $Y$ be an infinite set with $A, B \subseteq Y$, and denote by $\mathbb{F}^A_B$ the collection of all partial functions from $A$ to $B$; that is,  $\mathbb{F}^A_B = \{f: C \rightarrow B \mid C \subseteq A\}$. Then $[\mathbb{F}^A_B]^{<\omega} = \{f \in \mathbb{F}^A_B| |f| \in \omega\}$ is the set of all finite such partial functions. Clearly $[\mathbb{F}^A_B]^{<\omega} = \mathbb{F}^A_B$ when $A$ is finite. Consider next an example where $X$ is a special type of set derived from $Y$:

\begin{example} Let $X = Y \cup \PP(Y)$ where $Y$ is infinite and let $A, B\subseteq Y$. Then

\begin{enumerate}[(i)]
\item $\mathbb{F}^A_B$ is a compact subspace of $2^{\PP(X)}$,
\item $[\mathbb{F}^A_B]^{<\omega}$ is a proper dense subset of $\mathbb{F}^A_B$ if and only if $A$ is infinite, and
\item $[\mathbb{F}^A_B]^{<\omega}$  is not locally compact if and only if $A$ is infinite.
\end{enumerate}
\end{example}

\begin{proof}
\begin{enumerate}[(i)]
\item Note first that $\langle a,b\rangle \in \PP(X)$ for $a,b \in Y$ (that is, $\{\{a\}, \{a,b\}\} \in \PP(X))$. Thus, $\mathbb{F}^A_B \subset 2^{\PP(X)}$. If a point $p$ in $2^{\PP(X)}$ is not a function from a subset of $A$ to $B$ then either

\begin{enumerate}
\item[(1)] $p \not \subseteq A \times B$ so that $p$ contains an element $C$ where $C$ is not an element of $A \times B$. Then $C^+ \cap \mathbb{F}^A_B = \emptyset$, or
\item[(2)] $p \subseteq A \times B$ and $p$ contains at least two sets are of the form:
$\langle a,b\rangle$ and $\langle a,b'\rangle$ where $b \not = b'$. In this case, $\langle a,b\rangle^+ \cap \langle a,b'\rangle^+$ is an open neighbourhood of $p$ disjoint from
$\mathbb{F}^A_B$.
\end{enumerate}
Thus $\mathbb{F}^A_B$ is closed in $2^{\PP(X)}$ and thus compact.

\item It is easy to see that any basic open set about any $f \in \mathbb{F}^A_B$ defines a finite partial function from $A$ to $B$.

\item[(iii)]Observe from (i) and (ii) that $\mathbb{F}^A_B$ is a Tychonoff compactification of $[\mathbb{F}^A_B]^{<\omega}$ and recall that a Tychonoff space is locally compact if and only if its remainder is closed in each of its compactifications.  Let $p \in  [\mathbb{F}^A_B]^{<\omega}$ and let $ p \in  \bigcap C_i^+ \cap \bigcap D_j^- = U$. Each $C_i$ is of the
form $\langle a,b \rangle$ since $p$ is a function, while in the worst case scenario each $D_j$ has also the form $\langle a^*,b^*\rangle$. Thus any function in $U$ is compelled to include finitely many ordered pairs (as specified by each $C_i$) and at worst to exclude finitely many certain other ordered pairs within $A \times B$ (as potentially specified by $D_j$). Since $A$ is infinite, infinitely many infinite extensions of $p$ must exist in $U$. Thus $[\mathbb{F}^A_B]^{<\omega}$ has empty interior in $\mathbb{F}^A_B$. In particular, $\mathbb{F}^A_B \smallsetminus [\mathbb{F}^A_B]^{<\omega}$ is not closed and so $[\mathbb{F}^A_B]^{<\omega}$ is not locally compact.
\end{enumerate}
\end{proof}

Notice that item $(1)$ from the previous example shows that the set of all subsets from $A \times B$ is also compact.
We can define in a similar way the set $\mathbb{I}^A_B$ of injective partial functions from $A$ to $B$.

\begin{example} $\mathbb{I}^A_B$ is compact in $2^{\PP(X)}$ where $X = Y \cup \PP(Y)$ with $A,B \subseteq Y$ .
\end{example}

\begin{proof} The above example illustrates how to separate all functions in $\mathbb{F}^A_B$  from other objects in $2^{\PP(X)}$. Thus we must only focus on separating all injective functions from all other functions in $\mathbb{F}^A_B$. Let $f: C \rightarrow B$ be a function in $\mathbb{F}^A_B$ which is not injective. Then there exists distinct elements $a$, $a'$ in  $C$ so that $f(a) = f(a')$  from which it follows that $\langle a, f(a) \rangle^+ \cap \langle a' , f(a) \rangle^+ \cap \mathbb{I}^A_B = \emptyset$.
\end{proof}

Let $\mathbb{O}^A_B = \{ f \in \mathbb{F}^A_B \mid f$ is onto$\}$. The following example assumes that $|A| \geq |B|$ and the preceding notation. Of course, when $A$ is finite, then $\mathbb{O}^A_B$ is finite and thus compact.

\begin{example} $\mathbb{O}^A_B$ is a dense proper subset of $\mathbb{F}^A_B$ precisely when $A$ is infinite.
\end{example}
\begin{proof} Let $f \in \mathbb{F}^A_B$ and let $\bigcap C_i^+ \cap \bigcap D_j^-$ be a basic open neighbourhood of $f$. As described in the proof of Example 2.6 (iii)  since $A$ is infinite,  it is straightforward to find an onto function from $A$ to $B$ in this neighbourhood.
\end{proof}

\subsection*{Almost disjoint families}

\begin{definition} Let $\kappa$ be an infinite cardinal. If $x,y \subset \kappa$ and $|x \cap y| < \kappa$, then $x$ and $y$ are said to be \textbf{almost disjoint} ($\ad$). An a.d. \textbf{family}  is a collection $\mathcal{A}\subseteq \PP(\kappa)$ so that for all $x \in \mathcal{A}$, $|x| = \kappa$ and any two distinct elements of $\mathcal{A}$ are a.d. Such a family is \textbf{maximal} ($\madf$) whenever it is not contained in any other $\ad$ family.
\end{definition}

The following lemma shows that given a cardinal $\kappa$ the collection of all $\ad$ families of $\kappa$ is a compact subset of $2^{\PP(\kappa)}$.

\begin{lemma} \label{lem:ad} Let $\kappa$ be any cardinal. For any cardinal $\lambda \leq \kappa$ the set $\mathcal{A}_{\lambda} = \{\mathcal{A} \in 2^{\PP(\kappa)} \mid \forall x, y \in \mathcal{A}$, $|x| = |y| = \kappa$ and $|x \cap y| < \lambda\}$ is compact in $2^{\PP(\kappa)}$. In particular, the collection of all almost disjoint families (i.e. $\mathcal{A}_{\kappa}$) is compact.
\end{lemma}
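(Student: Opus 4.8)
The plan is to show that $\mathcal{A}_\lambda$ is a closed subset of the product space $2^{\mathcal{P}(\kappa)}$; since $2^{\mathcal{P}(\kappa)}$ is compact (Tychonoff), compactness of $\mathcal{A}_\lambda$ follows immediately. To establish closure I would take an arbitrary point $\mathcal{F} \in 2^{\mathcal{P}(\kappa)} \smallsetminus \mathcal{A}_\lambda$ and exhibit a basic open neighbourhood of $\mathcal{F}$ disjoint from $\mathcal{A}_\lambda$, exactly as in the preceding examples.

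The point is that failure of the membership $\mathcal{F} \in \mathcal{A}_\lambda$ comes in just two flavours, each witnessed by finitely many coordinates. First, $\mathcal{F}$ may contain some $x \subseteq \kappa$ with $|x| < \kappa$ (equivalently $|x| \neq \kappa$, since $x \subseteq \kappa$). Then $x^+$ is an open neighbourhood of $\mathcal{F}$ which misses $\mathcal{A}_\lambda$, because any family in $\mathcal{A}_\lambda$ that contains $x$ must have $|x| = \kappa$. Second, it may be that every element of $\mathcal{F}$ has cardinality $\kappa$, yet there exist distinct $x, y \in \mathcal{F}$ with $|x \cap y| \geq \lambda$; then $x^+ \cap y^+$ is an open neighbourhood of $\mathcal{F}$ disjoint from $\mathcal{A}_\lambda$, since any family in $\mathcal{A}_\lambda$ containing both $x$ and $y$ would satisfy $|x \cap y| < \lambda$. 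These two cases exhaust the ways in which $\mathcal{F}$ can violate the defining conditions, so $\mathcal{A}_\lambda$ is closed. Equivalently, and perhaps most cleanly, one writes $\mathcal{A}_\lambda = \bigcap \{ x^- : |x| < \kappa \} \cap \bigcap \{ x^- \cup y^- : x \neq y,\ |x \cap y| \geq \lambda \}$, exhibiting $\mathcal{A}_\lambda$ as an intersection of closed sets (each $x^\pm$ being clopen by the earlier remark). Taking $\lambda = \kappa$ then gives that the collection $\mathcal{A}_\kappa$ of all almost disjoint families is compact.

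There is no real obstacle here: the argument is a direct imitation of Example 2.6(i) and the almost-disjointness condition is, like being a partial function, a conjunction of conditions each depending on only one or two coordinates. The only points requiring a little care are the bookkeeping in the case analysis — verifying that the two cases genuinely cover every $\mathcal{F} \notin \mathcal{A}_\lambda$ — and the trivial observation that for subsets of $\kappa$ the predicate "$|x| = \kappa$" is precisely the negation of "$|x| < \kappa$"; the empty family lies in $\mathcal{A}_\lambda$ vacuously and causes no difficulty.
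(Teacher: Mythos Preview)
Your proof is correct and follows exactly the same two-case argument as the paper: separate off families containing a small set via $x^+$, and families containing a bad pair via $x^+\cap y^+$. The only addition is your explicit intersection representation of $\mathcal{A}_\lambda$ as an intersection of subbasic closed sets, which is a nice repackaging but not a different idea.
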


\begin{proof} Note that if $\alpha \in p \in 2^{\PP(\kappa)}$ with $|\alpha| < \kappa$, then $\alpha^+ \cap \mathcal{A}_{\lambda} = \emptyset$. Suppose that $p \in 2^{\PP(\kappa)} \smallsetminus \mathcal{A}_{\lambda}$ and that for all $\alpha \in p$, $|\alpha| = \kappa$. Then $p$ must contain at least two subsets of $\kappa$, $\alpha$ and $\beta$ say, so that $|\alpha \cap \beta| \geq \lambda$. In turn, $\alpha^+ \cap \beta^+ \cap \mathcal{A}_{\lambda} = \emptyset$.

\end{proof}

In light of the above, for a given cardinal $\kappa$, in ZFC we must have that the cardinality of any maximal chain in $\mathcal{A}_{\kappa}$ is at least $\kappa^+$. To see this, take any chain $C$ with cardinality $\kappa$ and notice that $\bigcup C \in \overline{C} \subset \mathcal{A}_{\kappa}$. If $|\bigcup C| = \kappa^+$ then we can always augment $C$ to a chain of size. If not, then a simple diagonal argument shows that $\bigcup C$ is not a $\madf$ family and in turn $C$ is not maximal $\kappa^+$ (\cite{MR756630} pg. 48).

\section{Main results}

We begin with some preliminary definitions:
\begin{definition} For $X$ an infinite set, $Lat(X)$ denotes the set of all sublattices of $\PP(X)$, $Lat(X,\join)$ is the set of all join complete sublattices of $\PP(X)$ ($Lat(X,\meet)$ is defined dually) and $LatB(X)$ is the set of all sublattices of $\PP(X)$ that contain $\emptyset$ and $X$.
\end{definition}

\begin{lemma} $Lat(X)$ is closed in $2^{\PP(X)}$ and each of $Lat(X, \join)$ and $Lat(X, \meet)$ is  dense in $Lat(X)$.

\end{lemma}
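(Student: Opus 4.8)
The plan is to establish the two assertions by neighbourhood arguments in the spirit of the examples above. For closedness of $Lat(X)$, I would take any $p \in 2^{\PP(X)} \smallsetminus Lat(X)$; since $p$ fails to be a sublattice of $\PP(X)$ there exist $A, B \in p$ with $A \cup B \notin p$ or $A \cap B \notin p$, and then $A^+ \cap B^+ \cap (A\cup B)^-$ (respectively $A^+ \cap B^+ \cap (A\cap B)^-$) is a clopen neighbourhood of $p$ disjoint from $Lat(X)$. Equivalently, $Lat(X) = \bigcap_{A,B \subseteq X}\big[(A^- \cup B^-) \cup ((A\cup B)^+ \cap (A\cap B)^+)\big]$ displays it as an intersection of clopen sets. (Here I read $Lat(X)$ as including the empty family as a trivial sublattice; this is needed, as I explain below.)

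For the density statements --- the argument for $Lat(X,\meet)$ being word-for-word that for $Lat(X,\join)$ --- I would fix $\mathcal{L} \in Lat(X)$ together with a basic neighbourhood $U = \bigcap_{i \le n} C_i^+ \cap \bigcap_{j \le m} D_j^-$, so that $C_1,\dots,C_n \in \mathcal{L}$ and $D_1,\dots,D_m \notin \mathcal{L}$. Let $\mathcal{L}_0$ be the sublattice of $\PP(X)$ generated by $\{C_1,\dots,C_n\}$, i.e. the closure of that finite set under binary union and intersection --- and, in the degenerate case $n = 0$, take $\mathcal{L}_0 = \{E\}$ for some $E \in \mathcal{L}$ (or $\mathcal{L}_0 = \emptyset$ if $\mathcal{L}$ itself is empty). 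Then $\mathcal{L}_0$ is a \emph{finite} sublattice, hence closed under all nonempty unions and intersections, so $\mathcal{L}_0 \in Lat(X,\join) \cap Lat(X,\meet)$; and since $\mathcal{L}$ is a sublattice containing every $C_i$ we have $\mathcal{L}_0 \subseteq \mathcal{L}$, whence each $D_j \notin \mathcal{L}$ is also $\notin \mathcal{L}_0$. Thus $\mathcal{L}_0 \in U$, and every basic neighbourhood of every point of $Lat(X)$ meets both $Lat(X,\join)$ and $Lat(X,\meet)$.

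I expect the only real subtlety to be one of conventions rather than of mathematical content. First, $\emptyset$ must count as a sublattice: otherwise $Lat(X)$ would be the above clopen-intersection set minus the single point $\emptyset$, yet $\emptyset$ lies in its closure, since every basic neighbourhood $\bigcap_j D_j^-$ of $\emptyset$ contains the singleton sublattice $\{E\}$ for any $E \notin \{D_1,\dots,D_m\}$ (such $E$ exists as $\PP(X)$ is infinite). Second, ``join complete'' must be read as ``closed under nonempty joins'': under the stronger reading that also demands the empty join $\emptyset \in \mathcal{L}$, every member of $Lat(X,\join)$ would lie in the closed set $\emptyset^+$, so $\overline{Lat(X,\join)} \subseteq \emptyset^+$ and density against sublattices omitting $\emptyset$ would be impossible. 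Once these conventions are fixed, the finitely-generated-sublattice trick goes through uniformly, the only fact used being that a finite lattice has all nonempty suprema and infima.
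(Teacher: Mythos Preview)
Your argument is correct and mirrors the paper's proof almost exactly: the paper separates a non-sublattice $\theta$ via $A^+ \cap B^+ \cap (A\cup B)^-$ (or the dual), and for density passes to the finite sublattice generated by the $A_i$'s, noting that a finite lattice is complete. Your explicit handling of the degenerate case $n=0$ and your careful discussion of the two conventions (that $\emptyset$ must count as a sublattice, and that ``join complete'' must mean closure under \emph{nonempty} joins) are genuine improvements over the paper, which passes over both points in silence; without those conventions the lemma as stated would indeed be false for precisely the reasons you give.
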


\begin{proof} It is routine to show that an element $\theta$ of $2^{\PP(X)}$ that is not a sublattice of $\PP(X)$ has a basic open neighbourhood that is disjoint from $Lat(X)$. The failure of $\theta$ to be a sublattice provides sets, according to how the failure is manifested, with which to assemble an appropriate open neighbourhood. For instance, if $\theta$ is not closed under binary joins then we can find $A, B \in \theta$ for which $A \cup B \not \in \theta$. In turn, $A^+ \cap B^+ \cap (A \cup B)^- \cap Lat(X) = \emptyset$.
 Next, let $L$ be a sublattice of $\PP(X)$ and let $\bigcap A_i^+ \cap \bigcap B_j^-$ be a basic open neighbourhood of  $L$. Since $L$ is a lattice, the join of any family of $A_i$s can not yield any $B_j$. Thus we can generate from the $A_i$s (by closing under intersection and union) a finite and therefore complete lattice .

\end{proof}

It is important to notice that any join complete lattice in $LatB(X)$ is a topology on $X$. Similarly, any infinite subblatice in $LatB(X)$ that is not join complete fails to be a topology on $X$. Consequently, $Top(X) \subset LatB(X)$. 

\begin{proposition} \label{prop:compact} $LatB(X)$ is a (Hausdorff) compactification of $Top(X)$.

\end{proposition}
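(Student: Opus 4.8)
The plan is to realise $LatB(X)$ as a closed subspace of $2^{\PP(X)}$. Since $2^{\PP(X)}$ is compact Hausdorff, this makes $LatB(X)$ compact Hausdorff, and then it only remains to check that $Top(X)$ --- which, as already observed, is contained in $LatB(X)$ --- sits densely inside it; together these say precisely that $LatB(X)$ is a Hausdorff compactification of $Top(X)$.

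For closedness, I would write $LatB(X) = Lat(X) \cap \emptyset^+ \cap X^+$: an element of $2^{\PP(X)}$ belongs to the right-hand side exactly when it is a sublattice of $\PP(X)$ that contains both $\emptyset$ and $X$. By the preceding lemma $Lat(X)$ is closed, while $\emptyset^+$ and $X^+$ are subbasic clopen sets, so the intersection is closed in $2^{\PP(X)}$ and hence compact; and it is Hausdorff as a subspace of the Hausdorff space $2^{\PP(X)}$.

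The substantive step is density. Let $L \in LatB(X)$ and let $W = \bigcap_{i=1}^{n} A_i^+ \cap \bigcap_{j=1}^{m} B_j^-$ be a basic open neighbourhood of $L$, so that $A_1,\dots,A_n \in L$ and $B_1,\dots,B_m \notin L$. I would let $\tau$ be the sublattice of $\PP(X)$ generated by the \emph{finite} set $\{\emptyset, X, A_1, \dots, A_n\}$ --- that is, its closure under binary unions and intersections. This mirrors the device used in the proof of the lemma on $Lat(X)$: $\tau$ is finite, it contains $\emptyset$ and $X$, and it is closed under finite unions and intersections; being finite, it is therefore closed under \emph{arbitrary} unions, so $\tau$ is a topology on $X$. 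Moreover, since $L$ is itself a lattice containing every generator of $\tau$, we have $\tau \subseteq L$, so no $B_j$ lies in $\tau$, while every $A_i$ does. Hence $\tau \in W \cap Top(X)$, proving that $Top(X)$ is dense in $LatB(X)$.

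There is no deep obstacle here; the one point that must not be overlooked is that the finite generated lattice has to be formed \emph{inside} $L$ (which it is, as $L$ is a lattice containing the generators), for that is what forces the negative constraints $B_j^-$ to be respected --- the positive constraints cost nothing, and the underlying structural fact is simply that a finite sublattice of $\PP(X)$ containing $\emptyset$ and $X$ is a finite topology. Finally, to see that the compactification is proper one may exhibit an explicit point of $LatB(X) \smallsetminus Top(X)$, for instance the sublattice consisting of all finite subsets of $X$ together with $X$ itself, which lies in $LatB(X)$ but, being an infinite sublattice that is not join complete, is not a topology.
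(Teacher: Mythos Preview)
Your proof is correct and follows essentially the same approach as the paper: closedness via $LatB(X)=Lat(X)\cap\emptyset^{+}\cap X^{+}$ is exactly the paper's argument (phrased there in complementary form using $\emptyset^{-}$ and $X^{-}$), and density via the finite lattice generated by $\{\emptyset,X,A_1,\dots,A_n\}$ is precisely the paper's ``finite topology with subbase $\{A_i\}$'', with the key containment $\tau\subseteq L$ matching the paper's observation that no $B_j$ is a meet or join of $A_i$s.
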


\begin{proof} To see that $LatB(X)$ is closed, and therefore compact,  in $2^{\PP(X)}$, it suffices by Lemma 3.2 to consider those sublattices of  $\PP(X)$ that omit either $X$ or $\emptyset$. We can then use $X^-$ or $\emptyset^-$, as appropriate, as a neighbourhood of the given sublattice that is disjoint from $LatB(X)$. Finally, if $L \in LatB(X)$ with $\bigcap A_i^+ \cap \bigcap B_j^-$ an open neighbourhood of $L$, then no $B_j$ can be the meet or join of any collection of $A_i$s. Thus we can define a (finite) topology with subbase $\{A_i\}$ which clearly belongs to the given neighbourhood. That is, $Top(X)$ is dense in $LatB(X)$.

\end{proof}

The previous theorem indicates that the concept of generalized topologies on a fixed set $X$ is that of sublattices of $\PP(X)$ containing $X$ and $\emptyset$. That is, limits of topologies are just elements from $LatB(X)$. 

\begin{corollary}
$Top(X)$ is not closed, and therefore not compact, in $2^{\PP(X)}$.
\end{corollary}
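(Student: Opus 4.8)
The plan is to deduce this immediately from Proposition~\ref{prop:compact}. That proposition gives that $LatB(X)$ is closed in $2^{\mathcal{P}(X)}$ and that $Top(X)$ is dense in it, so the closure of $Top(X)$ in $2^{\mathcal{P}(X)}$ is precisely $LatB(X)$. It therefore suffices to exhibit a single element of $LatB(X)\smallsetminus Top(X)$: such an element lies in $\overline{Top(X)}$ but not in $Top(X)$, so $Top(X)$ is not closed, and a non-closed subspace of the compact Hausdorff space $2^{\mathcal{P}(X)}$ cannot be compact.

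For the witness I would take $L=[X]^{<\omega}\cup\{X\}$, the family of all finite subsets of $X$ together with $X$ itself. There are three routine checks. First, $L$ is infinite (since $X$ is) and contains both $\emptyset$ and $X$. Second, $L$ is a sublattice of $\mathcal{P}(X)$: the union and the intersection of two finite sets are finite, and forming a union or intersection with $X$ yields $X$ or the other operand respectively, so $L\in LatB(X)$. Third, $L$ is not a topology: for any $p\in X$ the set $X\smallsetminus\{p\}$ is an infinite proper subset of $X$, hence belongs to neither $[X]^{<\omega}$ nor $\{X\}$, yet $X\smallsetminus\{p\}=\bigcup_{q\neq p}\{q\}$ is a union of members of $L$; thus $L$ is not join complete and, by the remark preceding Proposition~\ref{prop:compact}, is not a topology on $X$. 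Hence $L\in\overline{Top(X)}\smallsetminus Top(X)$, as required.

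I do not anticipate a genuine obstacle: the substantive work is already contained in Proposition~\ref{prop:compact}, and the corollary merely records that the compactification produced there is proper. The one point demanding a little care is the choice of $L$ --- a careless candidate such as the chain topology $\{\emptyset\}\cup\{\{x_1,\dots,x_n\}:n\in\mathbb{N}\}\cup\{X\}$ on a countably infinite $X$ would fail, being accidentally closed under arbitrary unions and hence an element of $Top(X)$. The family $[X]^{<\omega}\cup\{X\}$ sidesteps this for every infinite $X$.
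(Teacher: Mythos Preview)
Your proposal is correct and follows essentially the same route as the paper: both observe that $\overline{Top(X)}=LatB(X)$ by Proposition~\ref{prop:compact} and conclude from the proper containment $Top(X)\subsetneq LatB(X)$ that $Top(X)$ is not closed, hence not compact. The only difference is that you explicitly exhibit the witness $L=[X]^{<\omega}\cup\{X\}$ to the proper containment, whereas the paper simply asserts it.
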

\begin{proof}
$LatB(X)$ properly contains $Top(X)$ as a dense subset. Since $LatB(X)$ is closed in
$2^{\PP(X)}$, then $\overline{Top(X)} = LatB(X)$. Thus $Top(X)$ can be neither closed nor compact in $2^{\PP(X)}$.
\end{proof}

\begin{lemma} \label{lem:empty} $Top(X)$ has empty interior in $LatB(X)$.

\end{lemma}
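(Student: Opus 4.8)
The plan is to prove the stronger statement that no topology $\tau \in Top(X)$ is an interior point of $Top(X)$ within $LatB(X)$; since $\mathrm{int}(Top(X)) \subseteq Top(X)$, this yields the lemma. So I would fix $\tau \in Top(X)$ together with an arbitrary basic open neighbourhood $U = \bigcap_{i \le m} A_i^+ \cap \bigcap_{j \le n} B_j^-$ of $\tau$ in $2^{\PP(X)}$, so that $A_i \in \tau$ for each $i$ and $B_j \notin \tau$ for each $j$, and then construct a sublattice $L$ of $\PP(X)$ with $L \in U \cap LatB(X)$ but $L \notin Top(X)$. Since a topology is closed under arbitrary unions, it suffices to make $L$ a member of $LatB(X)$ that contains every $A_i$, omits every $B_j$, and fails to be closed under some arbitrary union.

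The first step is to isolate the relevant finite data. Let $L_0$ be the (finite) sublattice of $\PP(X)$ generated by $\{A_1, \dots, A_m, \emptyset, X\}$; as $\tau$ is a topology containing these sets it contains $L_0$, and hence $B_j \notin L_0$ for every $j$. Let $\mathbb{B}$ be the finite Boolean subalgebra of $\PP(X)$ generated by $\{A_1, \dots, A_m, B_1, \dots, B_n\}$, with atoms $P_1, \dots, P_N$; then $L_0 \subseteq \mathbb{B}$, so each member of $L_0$ and each $B_j$ is a union of atoms. Because $X$ is infinite some atom $P := P_1$ is infinite; I would fix a partition $P = P' \sqcup P''$ into two infinite sets together with a strictly increasing sequence $Q_1 \subsetneq Q_2 \subsetneq \cdots$ of finite subsets of $P'$ with $\bigcup_{k} Q_k = P'$.

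Now take $L$ to be the sublattice of $\PP(X)$ generated by $L_0 \cup \{Q_k : k \in \N\}$. Since every $Q_k$ sits inside the single atom $P$, a routine computation (using that $Q_k \cap Q_l = Q_{\min(k,l)}$ and that each $E \in L_0$ satisfies $E \cap P \in \{\emptyset, P\}$) identifies $L$ explicitly as $L_0 \cup \{E \cup Q_k : E \in L_0,\ E \cap P = \emptyset,\ k \in \N\}$. From this description: $L \in LatB(X)$ and $L \supseteq L_0 \ni A_i$; no $B_j$ lies in $L$, because each $B_j$ meets $P$ in $\emptyset$ or $P$ while each \emph{new} element $E \cup Q_k$ meets $P$ in $Q_k$, which is neither empty nor all of $P$ (and the old elements, those in $L_0$, already exclude the $B_j$); and $L$ is not a topology, since $\{Q_k : k \in \N\} \subseteq L$ but its union $\bigcup_{k} Q_k = P'$ is not in $L$ — it meets $P$ in $P'$, which is infinite and co-infinite in $P$, hence is neither $\emptyset$, nor $P$, nor any finite $Q_k$. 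Therefore $L \in (U \cap LatB(X)) \setminus Top(X)$, so $\tau$ is not interior, and the lemma follows.

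The main obstacle is ensuring that the enlarged lattice $L$ keeps avoiding the finitely many forbidden sets $B_j$ while becoming genuinely non-join-complete; introducing new sets into a lattice can create new joins and meets, so some care is needed here. The device that makes the bookkeeping automatic is to work entirely inside a single infinite atom $P$ of the Boolean algebra generated by all the $A_i$ together with all the $B_j$: relative to $P$ every $B_j$ and every element of $L_0$ is trivial (just $\emptyset$ or $P$), so a \emph{genuinely partial} chain placed inside $P$ can neither coincide with any $B_j$ nor interact with $L_0$ to produce one, and its supremum $P'$ escapes $L$ for the same reason — giving the required failure of join completeness.
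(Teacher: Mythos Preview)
Your proof is correct and follows the same overall strategy as the paper: given a basic neighbourhood $\bigcap A_i^+ \cap \bigcap B_j^-$ of $\tau$, locate an infinite set that is ``invisible'' to the finitely many $A_i$ and $B_j$ (each either contains it or misses it), then plant infinitely many new sets inside it so that the generated bounded sublattice stays in the neighbourhood but fails join completeness.

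The execution differs, and yours is the cleaner of the two. The paper splits into cases according to whether $\bigcup A_i \cup \bigcup B_j$ is cofinite, and in the cofinite case runs an iterative intersection procedure to manufacture an infinite set $S$ with the required ``all-or-nothing'' intersection behaviour; it then throws in an infinite \emph{partition} of $S$. You instead pass directly to an infinite atom $P$ of the finite Boolean subalgebra generated by the $A_i$ and $B_j$ --- this exists by pigeonhole and gives the all-or-nothing property for free, with no case split --- and then insert an increasing \emph{chain} of finite sets inside $P$ whose union is a proper co-infinite subset $P'$ of $P$. The Boolean-atom device makes the bookkeeping (especially the verification that no $B_j$ appears in the enlarged lattice, and that $P'$ escapes it) entirely automatic, whereas the paper's argument has to track which $A_i$'s and $B_j$'s were absorbed versus excised along the way.
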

\begin{proof} Take any topology $\tau$ on $X$ and a basic open neighbourhood $\bigcap A_i^+\cap \bigcap B_j^-$ of $\tau$.
We seek an infinite subset $S$ of $X$ and a partition $\mathcal{P} = \{P_k\}_{k \in \kappa}$ of $S$ into infinitely many subsets, whose finite joins and meets cannot be any $B_j$. $\mathcal{P}$ together with the $A_i$s will generate an element of $LatB(X)$  that is in the above-mentioned neighbourhood and that is not join complete (and thus not a topology). There are two possibilities:  either the complement of $\bigcup A_i \cup \bigcup B_j$ is finite or it is not.
In the latter case, just take $S$ to be the (infinite) complement, in which case any partition of $S$ into infinitely many subsets will suffice.
 For the former case,  rename all infinite $A_i$s and $B_j$s as $C_k$s. Let $\mathcal{C}_1 = C_1$,
\[
\mathcal{C}_2 = \left\{
\begin{array}{l l}
  \mathcal{C}_1 \cap C_2,& \quad \text{if } |\mathcal{C}_1 \cap C_2| \geq \aleph_0\\
  \\
  \mathcal{C}_1, & \quad \text{otherwise.}\\
\end{array} \right.
\]\\

In general,

\[
\mathcal{C}_k = \left\{
\begin{array}{l l}
  \mathcal{C}_{k-1} \cap C_k,& \quad \text{if } |\mathcal{C}_{k-1} \cap C_k| \geq \aleph_0\\
  \\
  \mathcal{C}_{k-1}, & \quad \text{otherwise.}\\
\end{array} \right.
\]\\
The process terminates with an infinite set $\mathcal{C}_n$. Let $S$ be that subset of $\mathcal{C}_n$ obtained by removing any elements that occur in intersections of finite cardinality with  $A_i$s and  $B_j$s (that is, those that did not partake in constructing $\mathcal{C}_n$). That is,

$$
S = \mathcal{C}_n \smallsetminus \left[\bigcup_{|\mathcal{C}_n \cap A_i| \in \omega} (\mathcal{C}_n \cap A_i) \cup \bigcup_{|\mathcal{C}_n \cap B_j| \in \omega} (\mathcal{C}_n \cap B_j) \right].
$$

Then any partition $\mathcal{P} = \{P_k\}_{k \in \kappa}$ of $S$ into infinitely many subsets together with the $A_i$s will generate the required family $L$, say,  in $LatB(X)$. Indeed, note that by design $A_i \cap S = \emptyset$ or $A_i \cap S = S$, $\forall i$ (the same is true for all $B_j$). It remains to show that no $B_j$ belongs to $L$. Let $k$ be any number so that $B_k \cap S = \emptyset$. Then for $B_k \in L$ it must be that $B_k$ is generated solely by the use of $A_i$s (contradiction). Otherwise, $B_k \cap S = S$. To this end, notice that no finite collection of elements from $\mathcal{P}$ can yield $S$. Consequently, we find again that $B_k$ must then be generated by taking a finite union of $A_i$s (contradiction).

\end{proof}

\begin{corollary} $Top(X)$ is not locally compact.
\end{corollary}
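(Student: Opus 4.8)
The plan is to deduce local non-compactness of $Top(X)$ directly from the two preceding results: Proposition \ref{prop:compact}, which identifies $LatB(X)$ as a Hausdorff compactification of $Top(X)$, and Lemma \ref{lem:empty}, which asserts that $Top(X)$ has empty interior in $LatB(X)$. The key principle is the standard characterization already invoked in Example 2.6(iii): a Tychonoff space $Z$ is locally compact if and only if $Z$ is open in one (equivalently, every) Hausdorff compactification of $Z$; equivalently, the remainder $K \smallsetminus Z$ is closed in $K$ whenever $K$ is a compactification of $Z$.

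First I would note that $Top(X)$ is Tychonoff, being a subspace of the Tychonoff (indeed zero-dimensional compact Hausdorff) space $2^{\mathcal{P}(X)}$. Next, by Proposition \ref{prop:compact}, $LatB(X)$ is a compactification of $Top(X)$, so the cited characterization applies with $K = LatB(X)$: if $Top(X)$ were locally compact it would be open in $LatB(X)$, i.e. it would equal its own interior in $LatB(X)$. But Lemma \ref{lem:empty} says that interior is empty, while $Top(X)$ is certainly nonempty (it contains, e.g., the indiscrete topology). This contradiction forces $Top(X)$ to fail to be locally compact.

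Since this is a short deduction, there is really no serious obstacle; the only point requiring care is to make sure the compactification characterization of local compactness is stated in a form that applies — namely that failure of openness in even a single Hausdorff compactification already defeats local compactness — and that Lemma \ref{lem:empty} genuinely gives \emph{empty} interior rather than merely non-open-ness at some points. Both are in hand. If one preferred a self-contained argument avoiding the general theorem, one could instead argue pointwise: fix any $\tau \in Top(X)$ and any neighbourhood $V$ of $\tau$ in $Top(X)$; write $V = U \cap Top(X)$ with $U$ open in $LatB(X)$; by Lemma \ref{lem:empty} (and its proof), $U$ meets $LatB(X) \smallsetminus Top(X)$, and in fact the closure of $V$ taken in $LatB(X)$ — which is compact — properly exceeds the closure taken in $Top(X)$, so no neighbourhood of $\tau$ in $Top(X)$ has compact closure. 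Either route yields the corollary; I would present the first, as it is the cleanest given the results already established.
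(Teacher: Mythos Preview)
Your proposal is correct and follows essentially the same approach as the paper: the paper's proof simply observes that by Lemma~\ref{lem:empty} the remainder $LatB(X)\smallsetminus Top(X)$ is not closed, and then invokes (implicitly) the same locally-compact-iff-remainder-closed characterization already used in Example~2.6(iii). Your write-up is just a more fully articulated version of that one-line argument.
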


\begin{proof}  By Lemma~\ref{lem:empty}, $LatB(X)\smallsetminus Top(X)$ is not closed and the result follows.

\end{proof}

\begin{corollary}
$Top(X)$ is dense and co-dense in $LatB(X)$.
\end{corollary}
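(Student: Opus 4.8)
The claim is that $Top(X)$ is \emph{dense and co-dense} in $LatB(X)$. Density is already established in Proposition~\ref{prop:compact}, so the only new content is co-density, i.e.\ that the complement $LatB(X) \smallsetminus Top(X)$ is also dense in $LatB(X)$. Equivalently, every nonempty basic open subset of $LatB(X)$ meets this complement; equivalently, $Top(X)$ has empty interior in $LatB(X)$. But this is exactly the content of Lemma~\ref{lem:empty}.

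The plan is therefore very short: invoke Proposition~\ref{prop:compact} for density, and invoke Lemma~\ref{lem:empty} for co-density, observing that a subset of a topological space is co-dense precisely when its complement is dense, which in turn holds precisely when the subset has empty interior. Since Lemma~\ref{lem:empty} asserts that $Top(X)$ has empty interior in $LatB(X)$, every nonempty open subset of $LatB(X)$ must contain a point of $LatB(X)\smallsetminus Top(X)$, so that complement is dense, i.e.\ $Top(X)$ is co-dense.

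There is essentially no obstacle here — the corollary is a formal repackaging of the two preceding results. The only point requiring a word of care is the elementary equivalence \emph{(complement dense)} $\iff$ \emph{(empty interior)}: if $U$ is a nonempty open set disjoint from $LatB(X)\smallsetminus Top(X)$ then $U \subseteq Top(X)$, contradicting empty interior; conversely any nonempty open $U \subseteq Top(X)$ would witness nonempty interior. I would state this in one sentence and then cite the two results.

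\begin{proof}
That $Top(X)$ is dense in $LatB(X)$ is precisely Proposition~\ref{prop:compact}. For co-density, recall that a subset of a topological space is co-dense exactly when its complement is dense, and the complement of a set is dense exactly when the set itself has empty interior. By Lemma~\ref{lem:empty}, $Top(X)$ has empty interior in $LatB(X)$; hence $LatB(X) \smallsetminus Top(X)$ is dense in $LatB(X)$, i.e.\ $Top(X)$ is co-dense.
\end{proof}
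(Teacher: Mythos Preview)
Your proof is correct and follows exactly the paper's own approach: the paper simply states that the result is immediate from Proposition~\ref{prop:compact} and Lemma~\ref{lem:empty}, and you have spelled out the (standard) equivalence between co-density and empty interior that makes this immediate.
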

\begin{proof} The proof is immediate from Proposition~\ref{prop:compact} and Lemma~\ref{lem:empty}.
\end{proof}
\newpage
\subsection*{Some Model Theory}
Let $\LL$ be the first order language of Boolean algebras ( \cite{MR1707268}) and $P$ a predicate unary symbol. Expand $\LL$
by adding $P$ and denote the expanded language by $\LL(P)$ (\cite{MR1386188}).

\begin{definition} Let $\Phi$ be a set of sentences of $\LL(P)$. We say that $\Phi$ is a definition for a set $\mathcal{F} \subset 2^{\PP(X)}$ provided that $\mathcal{F}$ is the collection of all $F \subset \PP(X)$ so that $(\PP(X),F) \models \Phi$. Similarly, a set of sentences $\{\Phi_i\}$ defines a family $\mathcal{F}$ provided that for each $F \in \mathcal{F}$ we have $(\PP(X), F) \models \Phi_i$ for all $i$.
\end{definition}

\begin{theorem} \label{thm:model1} A subspace of $2^{\PP(X)}$ is compact if it has a definition expressible as a universal sentence of $\LL(P)$.
\end{theorem}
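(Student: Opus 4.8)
The plan is to prove compactness by proving closedness: since $2^{\mathcal{P}(X)}$ is compact (and Hausdorff), every closed subspace is compact, so it suffices to show that a subspace with a universal definition is closed. Write the definition as $\Phi = \forall x_1 \cdots \forall x_n\, \psi(x_1,\dots,x_n)$ with $\psi$ quantifier-free, and put $\mathcal{F} = \{F \subseteq \mathcal{P}(X) : (\mathcal{P}(X),F)\models\Phi\}$. I would fix an arbitrary $p \in 2^{\mathcal{P}(X)}\setminus\mathcal{F}$ and manufacture a basic open neighbourhood of $p$ disjoint from $\mathcal{F}$; that exhibits the complement of $\mathcal{F}$ as open.

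The key observation to record first is that in any structure $(\mathcal{P}(X),F)$ the Boolean-algebra vocabulary ($\vee,\wedge$, complement, $0,1$, and $=$, $\leq$) is interpreted in the \emph{fixed} Boolean algebra $\mathcal{P}(X)$, with no reference to $F$; only the unary predicate $P$ is read as $F$. Hence, choosing a witnessing tuple $\bar a=(a_1,\dots,a_n)\in\mathcal{P}(X)^n$ with $(\mathcal{P}(X),p)\models\neg\psi(\bar a)$, every term $t$ occurring in $\psi$ evaluates at $\bar a$ to one fixed subset $t(\bar a)\subseteq X$ that is the same in $(\mathcal{P}(X),F)$ for all $F$. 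Therefore each atomic subformula of $\psi(\bar a)$ of the form $t_1(\bar a)=t_2(\bar a)$ or $t_1(\bar a)\leq t_2(\bar a)$ has a truth value independent of $F$, while an atomic subformula $P(t(\bar a))$ is true in $(\mathcal{P}(X),F)$ exactly when $t(\bar a)\in F$. I would then collect the finitely many subsets $S_1,\dots,S_m$ of $X$ arising as $t(\bar a)$ for terms $t$ sitting inside a $P(\cdot)$ in $\psi$.

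Next I would set $U=\bigcap_{\,S_i\in p}S_i^{+}\ \cap\ \bigcap_{\,S_i\notin p}S_i^{-}$, a basic open neighbourhood of $p$, and check that for every $q\in U$ one has $S_i\in q\iff S_i\in p$ for all $i$; consequently $(\mathcal{P}(X),q)$ and $(\mathcal{P}(X),p)$ agree on every atomic subformula of $\psi(\bar a)$, and since a quantifier-free formula is a propositional combination of its atomic subformulas, they agree on $\psi(\bar a)$ itself. As $(\mathcal{P}(X),p)\models\neg\psi(\bar a)$, also $(\mathcal{P}(X),q)\models\neg\psi(\bar a)$, so $q\notin\mathcal{F}$; thus $U\cap\mathcal{F}=\emptyset$, $\mathcal{F}$ is closed, hence compact. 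The same reasoning covers a set $\{\Phi_i\}$ of universal sentences, since then $\mathcal{F}=\bigcap_i\mathcal{F}_i$ is an intersection of closed sets.

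I do not expect a genuine obstacle. The one point demanding care is the bookkeeping in the second paragraph: making precise that the quantifier-free matrix $\psi(\bar a)$ can ``see'' nothing about $F$ beyond the answers to the finitely many membership queries $S_i\in F$, and that the Boolean operations contribute nothing $F$-dependent. If one prefers, this can be discharged by a short induction on the structure of $\psi$, but it seems cleanest to argue it directly via atomic subformulas as above. It is also worth remarking why a single universal sentence is the natural level of generality here: an existential quantifier would reintroduce a dependence on $F$ that a basic open set cannot pin down, which is exactly why $Top(X)$ itself — needing a $\Pi_2$-style join-completeness axiom — fails to be compact.
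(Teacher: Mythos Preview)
Your proposal is correct and follows the same overall strategy as the paper: prove closedness (hence compactness) by exhibiting, for any $p\notin\mathcal{F}$, a basic open neighbourhood of $p$ disjoint from $\mathcal{F}$, using witnesses to the failure of the universal sentence.

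The execution differs in a way worth noting. The paper first puts the matrix into conjunctive normal form and reduces to a single atomic (or negated atomic) formula, then argues that the neighbourhood $A_1^+\cap\cdots\cap A_n^+$ built from the witnessing tuple itself avoids $\mathcal{F}$. You instead work directly with the full quantifier-free matrix $\psi$: you isolate the finitely many term-values $S_i=t(\bar a)$ that sit inside an occurrence of $P(\cdot)$, and then pin down \emph{both} the positive and negative membership data of $p$ via $\bigcap_{S_i\in p}S_i^+\cap\bigcap_{S_i\notin p}S_i^-$. Your version makes explicit the point that the Boolean-algebra part of the language contributes nothing $F$-dependent and that the only information about $F$ visible to $\psi(\bar a)$ is the finite bit-vector $(S_i\in F)_i$; this is exactly what justifies the neighbourhood, and it handles arbitrary Boolean combinations of atomics in one stroke without the CNF reduction. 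The paper's route is terser but leans on the normal-form step; yours is a touch longer but more transparent, and in particular makes clear why negative subbasic sets $S_i^-$ are generally needed (they arise whenever an atomic $P(t)$ is satisfied in $p$ but must fail for members of $\mathcal{F}$).
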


\begin{proof} Let $\Phi(x_1, x_2, \ldots, x_n, P)$ be a well formed formula. We can assume that $\Phi$ is in conjunctive normal form; that is, $\Phi$ is a conjunction of atomic formulas and negation of atomic formulas. Thus  $\Phi(x_1, x_2, \ldots, x_n, P) = \alpha_1(x_1, x_2, \ldots, x_n, P) \wedge \ldots \wedge \alpha_k(x_1, x_2, \ldots, x_n, P)$ where each $\alpha_i$ is atomic or a negation of an atomic formula. Note that there is no loss no generality here since universal quantifiers distribute over conjunctions. We need only prove the theorem for an atomic formula since intersections of closed sets are closed. Suppose then that $\Phi(x_1, x_2, \ldots, x_n, P)$ is atomic.
Define $\Phi^*(P) = \forall$ $x_1, x_2, \ldots, x_n$ $\Phi(x_1, x_2, \ldots, x_n, P)$ and let $\mathcal{F} = \{ S \subseteq \PP(X)$ $|$ $ (\PP(X), S) \models
\Phi^*(P)\}$. If $S' \not \in \mathcal{F}$ then $(\PP(X), S') \not \models \Phi^*(P)$, thus $\exists$ $A_1, A_2, \dots A_n \in S'$ so that $(\PP(X), S')
[A_1/x_1, A_2/x_2, \ldots, A_n/x_n]$ does not satisfy $\Phi(x_1, x_2, \ldots, x_n, P)$. Hence, no element of $\mathcal{F}$ can contain all of the
$A_i$s. It follows that $A_1^+ \cap A_2^+ \cap \ldots \cap A_n^+ \cap \mathcal{F} = \emptyset$.\\

\end{proof}

\begin{corollary} A subspace of $2^{\PP(X)}$ is compact if it has a definition expressible as a collection of universal sentences of $\LL(P)$.
\end{corollary}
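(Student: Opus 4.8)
The plan is to reduce this corollary to \autoref{thm:model1} by an elementary compactness-of-closed-sets argument. Suppose a subspace $\mathcal{F} \subseteq 2^{\PP(X)}$ is defined by a collection $\{\Phi_i : i \in I\}$ of universal sentences of $\LL(P)$, so that $\mathcal{F} = \{S \subseteq \PP(X) \mid (\PP(X), S) \models \Phi_i \text{ for all } i \in I\}$. For each $i \in I$, write $\mathcal{F}_i = \{S \subseteq \PP(X) \mid (\PP(X), S) \models \Phi_i\}$. Then each $\Phi_i$ is a single universal sentence, so \autoref{thm:model1} applies to each $\mathcal{F}_i$ individually, giving that $\mathcal{F}_i$ is compact, hence closed, in $2^{\PP(X)}$ (using that $2^{\PP(X)}$ is Hausdorff, so compact subsets are closed).

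The key step is then the observation that $\mathcal{F} = \bigcap_{i \in I} \mathcal{F}_i$: a set $S$ satisfies every $\Phi_i$ precisely when it lies in every $\mathcal{F}_i$. An arbitrary intersection of closed sets is closed, so $\mathcal{F}$ is closed in $2^{\PP(X)}$. Since $2^{\PP(X)}$ is compact (a product of copies of the two-point discrete space, by Tychonoff), every closed subspace of it is compact, so $\mathcal{F}$ is compact. This completes the argument.

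I expect there to be essentially no obstacle here; the corollary is a routine strengthening of the theorem, and the only thing worth spelling out is that passing from a single universal sentence to an arbitrary family costs nothing because closedness is preserved under arbitrary intersections, whereas compactness on its own would not obviously be (but is recovered here because we land inside the compact ambient space $2^{\PP(X)}$). One could alternatively avoid invoking \autoref{thm:model1} as a black box and instead repeat its proof uniformly: if $S' \notin \mathcal{F}$ then $S'$ fails some $\Phi_i$, which after putting $\Phi_i$ in prenex conjunctive normal form produces finitely many witnesses $A_1, \dots, A_n \in S'$ such that no element of $\mathcal{F}$ contains all of them, so $A_1^+ \cap \cdots \cap A_n^+$ is a basic open neighbourhood of $S'$ missing $\mathcal{F}$; but the intersection-of-closed-sets phrasing is cleaner and is the one I would write.

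\begin{proof}
For each $i \in I$, let $\mathcal{F}_i = \{S \subseteq \PP(X) \mid (\PP(X), S) \models \Phi_i\}$. Since each $\Phi_i$ is a universal sentence of $\LL(P)$, \autoref{thm:model1} shows that $\mathcal{F}_i$ is compact, and hence closed, in the Hausdorff space $2^{\PP(X)}$. The subspace defined by the collection $\{\Phi_i : i \in I\}$ is exactly $\mathcal{F} = \bigcap_{i \in I} \mathcal{F}_i$, being an intersection of closed sets, is closed in $2^{\PP(X)}$. As $2^{\PP(X)}$ is compact, $\mathcal{F}$ is compact.
\end{proof}
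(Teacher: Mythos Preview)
Your proof is correct and follows essentially the same approach as the paper's: apply \autoref{thm:model1} to each $\Phi_i$ to get compact (hence closed) subspaces $\mathcal{F}_i$, then take their intersection. Your version is slightly more explicit about invoking Hausdorffness to pass from compact to closed, but the argument is otherwise identical.
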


\begin{proof} Let $\{\Phi_i\}$ be a collection of universal sentences in $\LL(P)$. Theorem~\ref{thm:model1} proves that each sentence defines a compact subspace $\mathcal{F}_i \subset 2^{\PP(X)}$. Lastly, $\{\Phi_i\}$ is then a definition for $\bigcap_i \mathcal{F}_i$, which is closed and thus compact.
\end{proof}

\begin{corollary} $Top(X)$ is not first-order definable via universal sentences.
\end{corollary}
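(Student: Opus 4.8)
The plan is to derive the non-definability from the topological facts already established, via a clean contrapositive. By Corollary 3.5 (the one following Theorem~\ref{thm:model1}), if $Top(X)$ were definable by a collection of universal sentences of $\LL(P)$, then $Top(X)$ would be compact in $2^{\PP(X)}$. But we have just shown in the Corollary following Corollary~\ref{prop:compact}'s companion that $Top(X)$ is not closed in $2^{\PP(X)}$, hence not compact (its closure is $LatB(X)$, which properly contains it by Proposition~\ref{prop:compact} and Lemma~\ref{lem:empty}). This contradiction immediately yields the claim.

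So the single key step is: cite Corollary 3.5 to convert ``universally definable'' into ``compact'', then cite the earlier corollary asserting $Top(X)$ is not compact. First I would state that a set definable by universal sentences is closed (hence compact, since $2^{\PP(X)}$ is compact Hausdorff), quoting the relevant corollary. Then I would recall that $\overline{Top(X)} = LatB(X) \supsetneq Top(X)$, so $Top(X)$ fails to be closed. The conclusion is that no such definition can exist.

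I expect there to be essentially no obstacle here: the statement is a one-line logical consequence of results proved earlier in the section, and the proof is genuinely just an application of the contrapositive of Corollary 3.5 together with the established non-closedness of $Top(X)$. If one wanted to say slightly more, one could remark that $Top(X)$ \emph{is} definable in $\LL(P)$ by a sentence that is $\Pi_2$ rather than $\Pi_1$ — closure under arbitrary unions of open sets requires an existential quantifier ranging over the index of the union — which is precisely the feature that the universal-sentence hypothesis of Theorem~\ref{thm:model1} rules out; but this observation is not needed for the proof and I would keep it as an aside at most.

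\begin{proof}
Suppose, for contradiction, that $Top(X)$ admitted a definition expressible as a collection of universal sentences of $\LL(P)$. By the preceding corollary, $Top(X)$ would then be a compact subspace of $2^{\PP(X)}$. However, by Proposition~\ref{prop:compact} and Lemma~\ref{lem:empty}, $\overline{Top(X)} = LatB(X)$ and this containment is proper, so $Top(X)$ is not closed in $2^{\PP(X)}$ and hence not compact. This contradiction shows that no such definition exists.
\end{proof}
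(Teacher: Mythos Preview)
Your proof is correct and matches the paper's intended argument: the corollary is stated without proof in the paper, as an immediate consequence of the preceding corollary (universal definability implies compactness) together with the earlier Corollary~3.4 that $Top(X)$ is not compact. One small simplification: you do not need to invoke Lemma~\ref{lem:empty} at all, since the non-compactness of $Top(X)$ is already recorded as a standalone corollary of Proposition~\ref{prop:compact}; citing that directly keeps the argument to a single line.
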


\begin{remark} Clearly, not all compact sets can be defined in terms of universal sentences for there are not enough sentences in our countable language to capture them all. What is not as obvious is that there are instances where the use of a unary predicate symbol is essential. Take for instance $\mathcal{A}_{\kappa}$ as defined in Lemma~\ref{lem:ad}. Add to $\mathcal{L}$ a unary predicate symbol $P$ with the following intended interpretation: $\forall x$ $P(x) \iff |x| = \kappa$. Then we can easily define $\mathcal{A}_{\kappa}$. However, the same is not true with the use of $\mathcal{L}$ alone.

Lastly, we can expand $\mathcal{L}$ to an uncountable language by adding for all $Z \subset X$ a unary predicate symbol $P_Z$. Given $Y \subset X$, we want $P_Z(Y)$ to be true if and only if $Z = Y$. It is then possible to define all subbasic closed sets of the form $A^-$ for if we let $\Phi_A= \forall x\neg P_A(x)$, then $A^-$ is the unique subset of $2^{\PP(X)}$ for which $(\PP(X), A^-) \models \Phi_A$. Similarly, $A^+$ is the unique set so that $(\PP(X), A^+) \models \neg \Phi_A$. In turn, any basic closed set satisfies a sentence expressed as a finite disjunction of the aforementioned `subbasic' sentences. Finally, a closed subset in $2^{\PP(X)}$ satisfies an arbitrary collection of `basic' sentences as described above. 
\end{remark}

\section{Conclusion}
This paper offers results from a preliminary investigation into the topological nature of $Top(X)$ as a subspace of $2^{\PP(X)}$. The outcomes thus far suggest that $Top(X)$'s topological character may well be difficult to gauge. Knowing whether $Top(X)$ is a $G_\delta$ or an $F_\sigma$, for example,  will shed further light on its topological complexity and crystallize the extent to which it may yield to further analysis. In this paper, we have exhibited examples of sets that are closed in $Top(X)$ but not  in $2^{\PP(X)}$. It would be nice to find (infinite) subsets  of $Top(X)$ that are closed in $2^{\PP(X)}$, thus giving a source of infinite compact subsets of $Top(X)$. For example, is it possible to find a copy of $\beta \mathbb{N}$ or a Cantor set in $Top(X)$? Answers to these questions would provide further interesting aspects of $Top(X)$.

\section{Acknowledgement}
It is a pleasure to record our indebtedness to Paul Bankston, Marquette University, Milwaukee for valuable conversations and insights in pursuit of these results.

\bibliographystyle{amsplain}
\bibliography{mybib2}

\providecommand{\bysame}{\leavevmode\hbox to3em{\hrulefill}\thinspace}
\providecommand{\MR}{\relax\ifhmode\unskip\space\fi MR }
\providecommand{\MRhref}[2]{%
  \href{http://www.ams.org/mathscinet-getitem?mr=#1}{#2}
}
\providecommand{\href}[2]{#2}
\begin{thebibliography}{10}

\bibitem{MR1366862bis}
Garrett Birkhoff, \emph{On the combination of topologies}, Fund. Math.
  \textbf{26} (1936), no.~1, 156--166.

\bibitem{MR1924043}
C.~Good, D.~W. McIntyre, and W.~S. Watson, \emph{Measurable cardinals and
  finite intervals between regular topologies}, Topology Appl. \textbf{123}
  (2002), no.~3, 429--441. \MR{1924043 (2003h:54001)}

\bibitem{MR1617095}
R.~W. Knight, P.~Gartside, and D.~W. McIntyre, \emph{All finite distributive
  lattices occur as intervals between {H}ausdorff topologies}, Proceedings of
  the {E}ighth {P}rague {T}opological {S}ymposium (1996), Topol. Atlas, North
  Bay, ON, 1997, pp.~167--172 (electronic). \MR{1617095}

\bibitem{MR756630}
Kenneth Kunen, \emph{Set theory}, Studies in Logic and the Foundations of
  Mathematics, vol. 102, North-Holland Publishing Co., Amsterdam, 1983, An
  introduction to independence proofs, Reprint of the 1980 original. \MR{756630
  (85e:03003)}

\bibitem{MR0388306}
Roland~E. Larson and Susan~J. Andima, \emph{The lattice of topologies: a
  survey}, Rocky Mountain J. Math. \textbf{5} (1975), 177--198. \MR{0388306 (52
  \#9143)}

\bibitem{MR1386188}
Mar{\'{\i}}a Manzano, \emph{Extensions of first order logic}, Cambridge Tracts
  in Theoretical Computer Science, vol.~19, Cambridge University Press,
  Cambridge, 1996. \MR{1386188 (97i:03002)}

\bibitem{MR1707268}
\bysame, \emph{Model theory}, Oxford Logic Guides, vol.~37, The Clarendon Press
  Oxford University Press, New York, 1999, With a preface by Jes{\'u}s
  Moster{\'{\i}}n, Translated from the 1990 Spanish edition by Ruy J. G. B. de
  Queiroz, Oxford Science Publications. \MR{1707268 (2000f:03001)}

\bibitem{MR1606394}
D.~W. McIntyre and W.~S. Watson, \emph{Basic intervals in the partial order of
  metrizable topologies}, Topology Appl. \textbf{83} (1998), no.~3, 213--230.
  \MR{1606394 (99c:54006)}

\bibitem{MR2051095}
\bysame, \emph{Finite intervals in the partial orders of zero-dimensional,
  {T}ychonoff and regular topologies}, Topology Appl. \textbf{139} (2004),
  no.~1-3, 23--36. \MR{2051095 (2005b:54006)}

\bibitem{MR0305322}
Richard Valent and Roland~E. Larson, \emph{Basic intervals in the lattice of
  topologies}, Duke Math. J. \textbf{39} (1972), 401--411. \MR{0305322 (46
  \#4452)}

\end{thebibliography}

\end{document}